	\theoremstyle{definition}
	\newtheorem{thm}{Theorem}
	\newtheorem*{thm*}{Theorem}
	\newtheorem{conj}[thm]{Conjecture}
	\newtheorem{quest}[thm]{Question}
\newlist{sumside}{itemize}{2}
\setlist[sumside]{topsep=0pt,label=-,leftmargin=2em,noitemsep}
\newcommand\myshade{85}
\definecolor{darkolivegreen}{rgb}{0.33, 0.42, 0.18}
\definecolor{darklavender}{rgb}{0.45, 0.31, 0.59}
\definecolor{darkraspberry}{rgb}{0.53, 0.15, 0.34}
\definecolor{darkred}{rgb}{0.55, 0.0, 0.0}
\colorlet{mylinkcolor}{darkred}
\colorlet{mycitecolor}{YellowOrange}
\colorlet{myurlcolor}{Aquamarine}
\newcommand{\ignore}[1]{}
\DeclareSymbolFont{largesymbols}{LMX}{zplm}{m}{n}
\patchcmd{\section}{\scshape}{\bfseries}{}{}
\renewcommand{\@secnumfont}{\bfseries}
\patchcmd{\@settitle}{\uppercasenonmath\@title}{\Large}{}{}
\patchcmd{\@setauthors}{\MakeUppercase}{\normalsize}{}{}
\def\th@definition{%
	\thm@headfont{\bfseries}
	\normalfont 
}
\DeclareMathOperator{\dilog}{L}
\DeclareMathOperator{\Li}{Li}
\DeclareMathOperator{\Bern}{B}
\begin{document}
\thispagestyle{empty}

\title{Searching for modular companions}

\author{Shashank Kanade\textsuperscript{1}}
\address{\textsuperscript{1} University of Denver, Denver, USA}
\thanks{I am presently supported
by a start-up grant provided by University of Denver.
I am extremely grateful to Matthew C.\ Russell for his many collaborations
and for suggesting improvements to the present article.
He also graciously provided the proof that the (new) series expression \eqref{eqn:cap1alt} for the first Capparelli identity equals the one found in \cite{KR2}. Thanks are also in order to Jehanne Dousse and Min-Joo Jang for illuminating discussions on radial limits.
}
\email{shashank.kanade@du.edu}

\begin{abstract}
In this note, we report on the results of a computer search performed to find possible modular companions to certain $q$-series identities and conjectures. For the search, we use conditions arising from the  asymptotics of Nahm sums. We focus on two sets of identities: Capparelli's identities, and certain partition conjectures made by the author jointly with Matthew C.\ Russell. 
\end{abstract}

\maketitle

\section{Motivation}

Recently, in several joint works with Matthew C.\ Russell and Debajyoti Nandi \cite{KR1,KR2,KNR}, we have undertaken various searches for discovering integer partition identities of Rogers-Ramanujan-type. 
In \cite{KR1} and \cite{KNR}, we generated many ``difference conditions'' analogous to the ones in Rogers-Ramanujan-type identities and then used Euler's algorithm (which factorizes a formal power series into an infinite product, see \cite{A}) to identify those conditions whose corresponding generating functions may equal periodic infinite products.
A search in the philosophically reverse direction was performed in \cite{KR2}. 
In \cite{KR2}, we started with principally specialized characters of certain level $2$ standard modules for the affine Lie algebra $A_9^{(2)}$
(such characters are always automatically periodic infinite products) and we found  ``analytic sum-sides'' and difference conditions corresponding to these products.
These identities have been recently proved by Bringmann, Jennings-Shaffer and Mahlburg \cite{BJM}. A few identities from \cite{KR1, KR2, Rth} (unrelated to $A_9^{(2)}$) remain unproven.

In all of the searches mentioned above, infinite products were involved in some way or the other. Among these, symmetric products correspond to $q$-series that are essentially modular. Expanding the search to include $q$-series that are modular but not-necessarily infinite products is a natural next step.
In this  note, we focus on two sets of identities/conjectures: Capparelli's identities  (which arise out of level $3$ standard modules for $A_2^{(2)}$) and some conjectures made by the author jointly with Matthew C.\ Russell \cite{KR1} which have product sides that are periodic modulo $9$. 
The reason to consider these two sets of identities is that their analytic sum-sides look very similar.  
We investigate, essentially employing a theorem of Vlasenko and Zwegers \cite{VZ} on asymptotics of certain types of $q$-series, whether perturbing the linear term in the exponent of $q$ in the summands of the analytic sum-sides (as given by \cite{KR2,Kur1} in the first case and \cite{Kur2} in the second) for these identities may lead to further modular companions. 
We do not find any new companions.

A Maple program was used for the explorations in this note. This program and its \texttt{PDF} printout
are attached as ancillary files with this article on \texttt{arXiv}. 

\bigskip


\section{Asymptotics and necessary conditions for modularity}


We define the following basic Pochhammer symbols:
\begin{align}
(z;q)_j &= \prod_{0\leq t < j}(1-zq^t),\qquad 
(z;q)_\infty =\prod_{0\leq t }(1-zq^t),\qquad 
(z_1,z_2,\dots,z_k;\,q)_\infty =\prod_{1\leq t \leq k}(z_t;q)_{\infty}.
\end{align}

Define the polylogarithm function 
\begin{align}
\Li_{m}(z)=\sum_{n\geq 1}\dfrac{z^n}{n^m},
\end{align}
and define the Rogers' dilogarithm function by:
\begin{align}
\dilog(z) = \Li_2(z) + \dfrac{1}{2}\ln(z)\ln(1-z).
\end{align}
It can be easily checked that $\lim\limits_{z\rightarrow 0^+}\dilog(z) = 0$ and $\lim\limits_{z\rightarrow 1^-}\dilog(z) = \frac{\pi^2}{6}$,
and thus we also define:
\begin{align}
\dilog(0)=0,\qquad \dilog(1)=\frac{\pi^2}{6}.
\end{align}

Let $k$ be a positive integer, $A=[A_{ij}]$ be a symmetric positive definite $k\times k$ matrix, $B=[B_i]$ and $J=[J_i]$ be $k\times 1$ vectors such that $J_i$ are positive integers and let $C$ be a constant. 
Letting $n=[n_i]$ a $k\times 1$ vector of non-negative integers, consider the following Nahm-type $q$-series:	
\begin{align}
F_{A,B,C,J}(q)=\sum_{n\geq 0}\dfrac{q^{\frac{1}{2}n^TAn + n^TB+C}  }{(q^{J_1};q^{J_1})_{n_1}(q^{J_2};q^{J_2})_{n_2}\dots (q^{J_k};q^{J_k})_{n_k} }.
\end{align}

Suppose that the system
\begin{align}
1-Q_i^{J_i}=Q_1^{A_{1i}}Q_2^{A_{2i}}\cdots Q_k^{A_{ki}}, \quad 1\leq i\leq k
\end{align}
has a unique solution for $Q_i\in(0,1)$, $1\leq i \leq k$.

Define the following constants:
\begin{align}
\xi_i&=J_i\dfrac{Q_i^{J_i}}{1-Q_i^{J_i}},\\
\alpha &=\sum_{1\leq i\leq k}\dfrac{1}{J_i}\left(\dilog\left(1\right) - \dilog\left(Q_i^{J_i}\right)\right),\\
\widetilde{A}&=A + \mathrm{diag}\{\xi_1,\xi_2,\dots,\xi_k\},\\
\beta&=(\det \widetilde{A})^{-1/2}\prod_{1\leq i\leq k}Q_i^{B_i}\left(1-Q_i^{J_i}\right)^{-1/2},\\
\gamma&=C+\dfrac{1}{24}\sum_{1\leq i\leq k}J_i\dfrac{1+Q_i^{J_i}}{1-Q_i^{J_i}}
\end{align}

Now, for each $1\leq i\leq k$, define a sequence of polynomials $D_p^{(i)}$ ($p\geq 1$) by considering the expansion coefficients of the following series:
\begin{align}
\exp\left(
\left(B_i+\dfrac{\xi_i}{2}\right)t_i\sqrt{\varepsilon}-\sum_{p\geq 3}
\dfrac{J_i^{p-1}}{p!}\Li_{2-p}\left(\dfrac{\xi_i}{J_i+\xi_i}\right)\Bern_p
\left(\dfrac{t_i}{\sqrt{\varepsilon}}\right)\varepsilon^{p-1}\right) = 1 + \sum_{p=1}^{\infty}D_p^{(i)}(t_i)\varepsilon^{p/2}.
\end{align}
Note that $\frac{\xi_i}{(J_i+\xi_i)}=Q_i^{J_i}$.
Here, $\Bern_p$ denotes the $p$th Bernoulli polynomial.
Also, for any fixed $i$, $D_p^{(i)}$ are polynomials in the variable $t_i$. 
$D_p^{(i)}$ depends on $B_i$ and also on the matrix $A$ via $\xi_i$. We will suppress this dependence.

Define the polynomials  $C_p(t_1,t_2,\dots,t_k)$ ($p\geq 1$) by:
\begin{align}
C_p(t_1,t_2,\dots,t_k)=\sum_{\substack{p_1+p_2+\cdots+p_k=p\\ p_1,p_2,\dots,p_k\geq 0}}D^{(1)}_{p_1}(t_1)
D^{(2)}_{p_2}(t_2)\dots D^{(k)}_{p_k}(t_k).
\end{align}

Finally define the constants $c_p$, $p\geq 1$ by:
\begin{align}
c_p=\dfrac{(\det \widetilde{A})^{1/2}}{(2\pi)^{r/2}}\int C_{2p}(t_1,t_2,\dots,t_k)e^{-\frac{1}{2}t^T\widetilde{A}t}dt_1dt_2\cdots dt_k,
\end{align}
where $t=[t_1,t_2,\dots,t_k]^T$.

We have the following theorem, which is a mild generalization of the theorem of Vlasenko and Zwegers that pertains to the case $J_1 = J_2 = \dots =J_k=1$.
\begin{thm}[cf.\ \cite{VZ}]
Let $q=e^{-\varepsilon}$ with $\varepsilon>0$.
As $\varepsilon\rightarrow 0^+$, we have the following asymptotic expansion:
\begin{align}
F_{A,B,C,J}(q)\sim\beta \cdot e^{\alpha/\varepsilon}\cdot e^{-\gamma\varepsilon}\cdot \left(1 + \sum_{p=1}^{\infty}c_p\varepsilon^p \right).
\end{align}
\end{thm}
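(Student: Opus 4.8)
\emph{Sketch of proof.} The natural strategy is to apply the Laplace (saddle-point) method to the lattice sum defining $F_{A,B,C,J}$, following Vlasenko--Zwegers and carrying the weights $J_i$ through every estimate. Put $q=e^{-\varepsilon}$ and write $n_i=x_i/\varepsilon$, so that the bulk of the sum comes from $x_i$ of order $1$. The only non-elementary ingredient is the behaviour of the finite products $(q^{J_i};q^{J_i})_{n_i}$ as $\varepsilon\to0^+$; writing $(q^{J_i};q^{J_i})_{n_i}=(q^{J_i};q^{J_i})_\infty/(q^{J_i(n_i+1)};q^{J_i})_\infty$ and using the classical (Dedekind $\eta$) asymptotic $(q^{J};q^{J})_\infty\sim\sqrt{2\pi/(J\varepsilon)}\,e^{-\pi^2/(6J\varepsilon)}$ together with the fixed-argument expansion $\log(z;q^{J})_\infty\sim -\Li_2(z)/(J\varepsilon)+\tfrac12\log(1-z)$, the logarithm of the summand is, to leading order, $\varepsilon^{-1}V(x)$ with
\[
V(x)=-\tfrac12 x^{T}Ax+\sum_{i}\frac{1}{J_i}\Bigl(\frac{\pi^2}{6}-\Li_2\bigl(e^{-J_i x_i}\bigr)\Bigr),
\]
the dilogarithm coming from $\tfrac1{J_i\varepsilon}\int_0^{J_i x_i}\log(1-e^{-u})\,du$.

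First I would locate the interior maximum of $V$. Using $\frac{d}{dx}\Li_2(e^{-Jx})=J\log(1-e^{-Jx})$, the critical-point equations read $\sum_j A_{ij}x_j=-\log(1-e^{-J_i x_i})$, which in the variables $Q_i=e^{-x_i}$ become exactly the system $1-Q_i^{J_i}=Q_1^{A_{1i}}\cdots Q_k^{A_{ki}}$ whose unique solution in $(0,1)^k$ is hypothesised (here one uses the symmetry of $A$). A short computation with $\dilog(z)=\Li_2(z)+\tfrac12\ln z\ln(1-z)$ then gives $V(x^{*})=\alpha$, so the leading exponential is $e^{\alpha/\varepsilon}$; differentiating once more shows the Hessian at $x^{*}$ equals $-(A+\mathrm{diag}\{\xi_1,\dots,\xi_k\})=-\widetilde A$, which is negative definite since $A$ is positive definite and each $\xi_i>0$. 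This produces the Gaussian weight $e^{-\frac12 t^{T}\widetilde A t}$ and, after integration, the normalisation $(\det\widetilde A)^{-1/2}$ in $\beta$; the remaining factors of $\beta$ are the saddle value $\prod_i Q_i^{B_i}$ of $q^{n^{T}B}$ together with the $\varepsilon^{0}$ terms of the product asymptotics, while $\gamma$ collects $C$ and all $O(\varepsilon)$ contributions (the $\tfrac{J_i\varepsilon}{24}$ from the $\eta$-asymptotic and the first Euler--Maclaurin correction).

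To upgrade this to the full expansion $1+\sum_p c_p\varepsilon^{p}$ I would rescale the fluctuations by $n_i=x_i^{*}/\varepsilon+t_i/\sqrt\varepsilon$ and expand $\log a_n$ in powers of $\sqrt\varepsilon$ about the saddle. The decisive structural observation is that the cross terms $A_{ij}n_in_j$ ($i\neq j$) are exactly quadratic, so they feed only the Gaussian $-\tfrac12 t^{T}\widetilde A t$ and never the higher powers of $t$; consequently every correction of order $\geq\tfrac32$ in $\sqrt\varepsilon$ factorises over the single indices $i$. For fixed $i$ the single-index data — the linear weight $B_i$ and the Euler--Maclaurin expansion of $-\log(q^{J_i};q^{J_i})_{n_i}$, whose higher $m$-derivatives evaluate to the negative-order polylogarithms $\Li_{2-p}(Q_i^{J_i})$ and whose non-integer endpoint is packaged by $\Bern_p(t_i/\sqrt\varepsilon)$ — is precisely the exponent defining $D_p^{(i)}$. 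Their products give $C_p=\sum_{p_1+\cdots+p_k=p}\prod_i D^{(i)}_{p_i}$, and converting the sum over $n$ into a Gaussian integral over $t$ leaves exactly the integrals $c_p$ defined above; the odd orders $C_{2p+1}$ integrate to zero by parity, so only integer powers of $\varepsilon$ survive.

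The genuine work is analytic rather than formal. I expect the main obstacle to be threefold: (i) showing the sum is dominated by an $O(\varepsilon^{-1/2})$-neighbourhood of $n^{*}=x^{*}/\varepsilon$, with exponentially small tails, so that the local expansion controls the whole series; (ii) making the Euler--Maclaurin expansion of $\log(q^{J_i};q^{J_i})_{n_i}$ uniform, in particular isolating the logarithmic singularity of $\log(1-e^{-J_i m\varepsilon})$ near $m=0$ before summing, which is what forces the $\eta$-asymptotic of $(q^{J_i};q^{J_i})_\infty$ into the constant $\beta$; and (iii) justifying the term-by-term interchange of the sum-to-integral passage with the $\sqrt\varepsilon$-expansion, with remainders uniform in $\varepsilon$. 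All of this is carried out in \cite{VZ} for $J_1=\cdots=J_k=1$; since distinct $J_i$ cannot be removed by a single substitution $q\mapsto q^{1/J}$, the content here is to verify that those estimates persist, each $J_i$ entering only through the rescaling $m\varepsilon\mapsto J_i m\varepsilon$. I would pay closest attention to the $\varepsilon^{0}$ normalisation: the shift $n_i\mapsto n_i+1$ in $(q^{J_i(n_i+1)};q^{J_i})_\infty$ contributes an extra $(1-Q_i^{J_i})^{-1}$ through the $\Li_2$-term, and the prefactor $\sqrt{2\pi/(J_i\varepsilon)}$ of $(q^{J_i};q^{J_i})_\infty$ carries the $J_i$-dependence, so that checking how these combine with $(\det\widetilde A)^{-1/2}$ to the stated $\beta$ is the delicate step.
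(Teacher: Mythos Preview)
Your sketch is correct and follows exactly the route the paper intends: the paper's own proof is the single sentence ``The proof is just a careful reworking of the proof of Vlasenko and Zwegers,'' so you have in fact supplied considerably more detail than the paper does. Your identification of the saddle system, the value $V(x^{*})=\alpha$, the Hessian $-\widetilde A$, and the factorisation of the higher corrections into the $D_p^{(i)}$ all match the definitions set up in the paper, and the analytic caveats you flag (tail estimates, uniform Euler--Maclaurin, the $\varepsilon^{0}$ bookkeeping with the $J_i$) are precisely the places where ``careful reworking'' is required.
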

\begin{proof}
	The proof is just a careful reworking of the proof of Vlasenko and Zwegers.
\end{proof}

We now recall the modular constraints from Vlasenko and Zwegers, which they proved in the case $J_1 = J_2 = \dots =J_k=1$, which are easily generalizable to suit our case.
\begin{thm}[cf.\ \cite{VZ}]
Suppose $F_{A,B,C,J}(q)$ is a modular form of weight $w$ for some subgroup $\Gamma \leq SL(2,\mathbb{Z})$ of finite index.
Then,
\begin{enumerate}
	\item The weight $w$ has to be $0$.
	\item $\alpha\in\pi^2\mathbb{Q}$.
	\item $e^{-\gamma\varepsilon}\cdot \left(1 + \sum\limits_{p=1}^{\infty}c_p\varepsilon^p \right)=1$.
	This leads to an infinite list of constraints, first four of which are given by:
	\begin{align}
	0&=c_1 - \gamma,\label{eqn:c1gamma}\\
	0&=c_2 - c_1\gamma+\frac{\gamma^2}{2},\label{eqn:c2gamma}\\
	0&=c_3 - c_2\gamma+c_1\frac{\gamma^2}{2} - \frac{\gamma^3}{3!},\label{eqn:c3gamma}\\	
	0&=c_4 - c_3\gamma+c_2\frac{\gamma^2}{2} - c_1\frac{\gamma^3}{3!} + \frac{\gamma^4}{4!}\label{eqn:c4gamma}.
	\end{align}
	In particular, \eqref{eqn:c1gamma} determines the constant $C$ in terms of the matrix $A$ and the vector $B$.
\end{enumerate}
\end{thm}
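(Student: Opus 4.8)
The plan is to compare two descriptions of the behaviour of $F_{A,B,C,J}(q)$ as $q\to 1^-$ along the real axis, i.e.\ as $\varepsilon\to 0^+$ with $q=e^{-\varepsilon}$. The first description is the asymptotic expansion from the previous theorem. The second comes from modularity: writing $q=e^{2\pi i\tau}$ so that $\tau=i\varepsilon/(2\pi)\to 0$, the limit $\varepsilon\to 0^+$ is an approach to the cusp $0$ along the imaginary axis. Since $S=\left(\begin{smallmatrix}0&-1\\1&0\end{smallmatrix}\right)$ sends the cusp $0$ to $\infty$, and $\Gamma$ has finite index, the slash $F|_w S$ is a modular form of the same weight $w$ for the conjugate finite-index group $S^{-1}\Gamma S$, hence admits a Fourier expansion $\sum_{n\geq n_0}b_n\,e^{2\pi i n\tau/h}$ in the cusp parameter, with some width $h\in\ZZ_{>0}$ and leading nonzero coefficient $b_{n_0}$.

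First I would set $\tau'=-1/\tau=2\pi i/\varepsilon$, so that $\tau'\to i\infty$ as $\varepsilon\to 0^+$, and rewrite the automorphy relation as $F(\tau)=(\tau')^w\,(F|_w S)(\tau')$. Substituting the Fourier expansion and using $e^{2\pi i n\tau'/h}=e^{-4\pi^2 n/(h\varepsilon)}$ gives, as $\varepsilon\to 0^+$,
\begin{align}
F_{A,B,C,J}(q)\sim (2\pi i)^w\,b_{n_0}\,\varepsilon^{-w}\,e^{-4\pi^2 n_0/(h\varepsilon)},
\end{align}
with every remaining term exponentially smaller than this leading one. The crucial structural point is that beyond the single exponential $e^{-4\pi^2 n_0/(h\varepsilon)}$ this expression carries only one power of $\varepsilon$, namely $\varepsilon^{-w}$: there is no nontrivial power series in $\varepsilon$, only corrections that are beyond all polynomial orders.

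Next I would match this against $\beta\,e^{\alpha/\varepsilon}\,e^{-\gamma\varepsilon}\bigl(1+\sum_{p\geq 1}c_p\varepsilon^p\bigr)$. Equating the coefficients of $1/\varepsilon$ in the exponent forces $\alpha=-4\pi^2 n_0/h\in\pi^2\QQ$, which is item (2). Cancelling the common factor $e^{\alpha/\varepsilon}$ leaves the identity of asymptotic expansions
\begin{align}
\beta\,e^{-\gamma\varepsilon}\Bigl(1+\sum_{p\geq 1}c_p\varepsilon^p\Bigr)=(2\pi i)^w\,b_{n_0}\,\varepsilon^{-w}\bigl(1+(\text{exponentially small})\bigr).
\end{align}
The left side is a genuine power series in $\varepsilon$ with nonzero constant term $\beta$, while the right side is a constant multiple of $\varepsilon^{-w}$; since $\beta\neq 0$ and $b_{n_0}\neq 0$, a single monomial can match a power series of nonzero constant term only when $w=0$, giving item (1). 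Finally, setting $w=0$ and comparing constant terms yields $b_{n_0}=\beta$, and dividing through leaves the formal identity $e^{-\gamma\varepsilon}(1+\sum_p c_p\varepsilon^p)=1$ of item (3); extracting the coefficients of $\varepsilon^1,\varepsilon^2,\varepsilon^3,\varepsilon^4$ reproduces \eqref{eqn:c1gamma}--\eqref{eqn:c4gamma}, and since $\gamma$ depends on $C$ affinely while $c_1$ does not involve $C$, the first of these solves for $C$.

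The main obstacle is making rigorous the assertion that the modular side contributes no power series in $\varepsilon$ beyond $\varepsilon^{-w}$, and that the comparison of the two expansions is legitimate coefficient by coefficient. This requires that the expansion of the previous theorem is a genuine Poincar\'e asymptotic expansion, so that matching the coefficient of each power of $\varepsilon$ is valid, and that the subleading Fourier terms $e^{-4\pi^2 n/(h\varepsilon)}$ with $n>n_0$ really are negligible to all polynomial orders along the chosen ray; both are inherited from the Vlasenko--Zwegers analysis together with the (weak) holomorphy of $F|_w S$ at the cusp. One should also verify that nothing is lost by restricting to the imaginary-axis approach and by the possibility $S\notin\Gamma$: the former is precisely the regime in which the first theorem is stated, and the latter is absorbed by passing to the finite-index conjugate $S^{-1}\Gamma S$.
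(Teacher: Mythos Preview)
The paper does not give a proof of this theorem at all; it simply states the result, attributes it to Vlasenko--Zwegers, and remarks that their case $J_1=\cdots=J_k=1$ is ``easily generalizable.'' Your sketch is essentially the Vlasenko--Zwegers argument, correctly adapted, so there is nothing substantive to compare.

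Your argument is sound in outline: relate the radial limit $q\to 1^-$ to the cusp $0$, transport to $\infty$ via $S$, use that $F|_wS$ is a modular form on the finite-index subgroup $S^{-1}\Gamma S$ and hence has a Fourier expansion with exponentially decaying tail, and match this against the Poincar\'e expansion of the previous theorem term by term. The conclusions $w=0$, $\alpha\in\pi^2\QQ$, and $e^{-\gamma\varepsilon}(1+\sum c_p\varepsilon^p)=1$ then drop out exactly as you say. The caveats you flag---that the previous theorem really yields a Poincar\'e asymptotic expansion so coefficient matching is legitimate, and that the modular side contributes nothing beyond the single monomial in $\varepsilon$ up to exponentially small errors---are the right ones to flag, and both are handled in \cite{VZ}.
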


\section{Capparelli's identities}

Consider Capparelli's identities \cite{Cap}.
\begin{thm}[\cite{Cap}] Let $n$ be a positive integer.
	\begin{enumerate}
		\item The number of partitions of $n$ in which no part is equal to $1$, difference between adjacent parts is at least $2$, and is at least $4$ unless the adjacent parts add up to a multiple of $6$ is the same as the coefficient of $q^n$ in 
		$1/(q^2,q^3,q^9,q^{10};\, q^{12})_{\infty}$.
		\item The number of partitions of $n$ in which no part is equal to $2$, difference between adjacent parts is at least $2$, and is at least $4$ unless the adjacent parts add up to a multiple of $6$ is the same as the coefficient of $q^n$ in 
		$   (q^2,q^{10};\,q^{12})_{\infty}/(q;\,q^2)_{\infty}$.
	\end{enumerate}
\end{thm}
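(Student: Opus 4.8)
The plan is to recast each statement as an identity between $q$-series. The product sides are already explicit, while the two gap-condition sides become generating functions $G_1(q)=\sum_n c_1(n)q^n$ and $G_2(q)=\sum_n c_2(n)q^n$, where $c_1(n)$ and $c_2(n)$ count the partitions described in parts (1) and (2). The goal is then to prove
\[
G_1(q) = \frac{1}{(q^2,q^3,q^9,q^{10};\,q^{12})_\infty}, \qquad G_2(q) = \frac{(q^2,q^{10};\,q^{12})_\infty}{(q;\,q^2)_\infty}.
\]
First I would refine the bookkeeping by introducing a variable $z$ marking the number of parts together with a parameter recording the largest part, so that the combinatorial constraints can be transcribed into functional relations among these refined series.

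The core of the argument is to derive a system of $q$-difference equations for the refined generating functions by removing the largest part of a partition and analysing how admissibility of the remainder constrains the second-largest part. Because the exceptional clause (``difference at least $4$ unless the two parts sum to a multiple of $6$'') depends on the actual values of the parts modulo $6$, and not merely on their difference, the recursion does not close on a single series. The remedy is to split partitions according to the residue of the largest part modulo $6$ (equivalently modulo $3$ after accounting for parity), producing a coupled system of functional equations indexed by these residues. One then solves the system, either by guessing the product solution and verifying that it satisfies the same recursion and the correct initial conditions, or by iterating the $q$-difference relations into an explicit series and resumming.

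An appealing alternative, which I would pursue in parallel, is the method of weighted words: one works first with partitions into integers carried in several colors subject to a single uniform ordering condition, proves a master identity at the colored level by a short recurrence or bijection, and then recovers Capparelli's identities by dilating $q\mapsto q^6$ and translating the colors onto the residue classes $\pm 2,\pm 3\pmod{12}$. This dissolves the awkward mod-$6$ clause into an ordinary difference condition on colored parts, at the cost of color bookkeeping. A third route, closest to the identities' origin, is representation-theoretic: realize the gap-condition partitions as indexing a quasi-particle or vertex-operator basis of the level $3$ standard modules for $A_2^{(2)}$ and read off each product as a principally specialized character.

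The step I expect to be the main obstacle is precisely the congruence-dependent gap condition. In Gordon--Andrews--Bressoud--type theorems the gaps are uniform, so shifting a partition by a constant preserves admissibility and the generating functions enjoy clean translation symmetry; here the ``$\equiv 0 \pmod{6}$'' exception destroys that symmetry and forces the residue-class splitting. Getting the coupled $q$-difference system to decouple, or to telescope into the stated products, is where the real work lies, and where one must check carefully that no boundary partitions---those involving the forbidden part $1$ (resp.\ $2$), or minimal pairs whose sum is a multiple of $6$---are miscounted.
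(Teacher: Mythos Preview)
The paper does not prove this theorem. Capparelli's identities are quoted with a citation to \cite{Cap} and used as background; no argument for them appears in the paper, so there is no ``paper's own proof'' against which to compare your proposal.

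That said, what you have written is not a proof but a survey of three possible strategies (residue-split $q$-difference equations, weighted words with a dilation $q\mapsto q^6$, and the vertex-operator realization inside level~$3$ standard $A_2^{(2)}$-modules). Each of these corresponds to an actual proof in the literature---Andrews' generating-function argument, the Alladi--Andrews--Gordon weighted-words refinement, and Capparelli's original representation-theoretic derivation, respectively---so your instincts about where the difficulties lie and how to dissolve the mod-$6$ clause are accurate. But none of the three routes is carried out: you never write down the coupled system, never state the colored master identity, and never specify the basis of the standard module. As it stands the proposal identifies the obstacle correctly but does not surmount it; to turn it into a proof you would need to commit to one route and execute it in full.
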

These identities arise out of the principally specialized characters of level $3$ standard modules for the  affine Lie algebra $A_2^{(2)}$.

Analytic sum-sides for the partitions satisfying the difference conditions were found independently in \cite{KR2} and \cite{Kur1}, using different techniques from each other. With these, the identities could be expressed purely using $q$-series.
\begin{thm}[Capparelli's identities, \cite{KR2, Kur1}] We have the following equalities of $q$-series.
\begin{align}
\label{eqn:cap1}
&\sum\limits_{n_1,n_2\geq 0} \dfrac{q^{2n_1^2+6n_1n_2+6n_2^2}}{(q;q)_{n_1}(q^3;q^3)_{n_2}}
=\dfrac{1}{(q^2,q^3,q^9,q^{10};\, q^{12})_{\infty}}\\
&\qquad\qquad
=
\sum\limits_{n_1,n_2\geq 0} \dfrac{q^{2n_1^2+6n_1n_2+6n_2^2+n_1 }}{(q;q)_{n_1}(q^3;q^3)_{n_2}}
+
\sum\limits_{n_1,n_2\geq 0} \dfrac{q^{2n_1^2+6n_1n_2+6n_2^2+4n_1+6n_2+2  }}{(q;q)_{n_1}(q^3;q^3)_{n_2}},
\label{eqn:cap1alt}\\
\label{eqn:cap2}
&\sum\limits_{n_1,n_2\geq 0} \dfrac{q^{2n_1^2+6n_1n_2+6n_2^2+n_1 }}{(q;q)_{n_1}(q^3;q^3)_{n_2}}
+
\sum\limits_{n_1,n_2\geq 0} \dfrac{q^{2n_1^2+6n_1n_2+6n_2^2+4n_1+6n_2+1  }}{(q;q)_{n_1}(q^3;q^3)_{n_2}}
\notag\\
&\qquad\qquad =
\sum\limits_{n_1,n_2\geq 0} \dfrac{q^{2n_1^2+6n_1n_2+6n_2^2+n_1 +3 n_2}}{(q;q)_{n_1}(q^3;q^3)_{n_2}}
+
\sum\limits_{n_1,n_2\geq 0} \dfrac{q^{2n_1^2+6n_1n_2+6n_2^2+3n_1+6n_2+1  }}{(q;q)_{n_1}(q^3;q^3)_{n_2}}
=
\dfrac{(q^2,q^{10};\,q^{12})_{\infty}}{(q;\,q^2)_{\infty}}.
\end{align}
In the first identity, the very first expression was found independently in \cite{KR2} and \cite{Kur1}, the third expression appears to be new. 
We thank Matthew C.\ Russell for providing a proof that the third expression is indeed equal to the first. A sketch of the proof is as follows.
Given a series $f(x,q)=\sum_{m,n\geq 0} a_{m,n} x^m q^n$, consider $\widetilde{f}(x,q)=\sum_{m,n\geq 0} a_{m,n} x^m q^{3m(m-1)/2 +  n}$. We say that $\widetilde{f}$ is obtained from $f$ by inserting a $3$-staircase (cf.\ \cite{KR2}).
Also recall the following fundamental $q$-series identities due to Euler:
\begin{align}
\left(x;q\right)_\infty^{-1}&=
\sum_{n\geq 0}\dfrac{x^n}{(q;q)_n},\qquad
\left(-x;q\right)_\infty=
\sum_{n\geq 0}\dfrac{x^nq^{n(n-1)/2}}{(q;q)_n}\label{eqn:euler}.
\end{align}
Consider:
\begin{align}
g(x,q)&=(1+xq^2)\dfrac{(-xq^3;q)_{\infty}}{(x^2q^3;q^3)_\infty} 
=\sum_{n_1,n_2\geq 0}\dfrac{x^{n_1+2n_2}q^{n_1^2/2 + 5n_1/2 + 3n_2}}{(q;q)_{n_1}(q^3;q^3)_{n_2}}
+ \sum_{n_1,n_2\geq 0}\dfrac{x^{n_1+2n_2+1}q^{n_1^2/2 + 5n_1/2 + 3n_2+2}}{(q;q)_{n_1}(q^3;q^3)_{n_2}},
\end{align}
where the second equality follows by \eqref{eqn:euler}.
It can be checked easily that $\widetilde{g}(x,q)\big\vert_{x=1}$ equals the double-sum expression in \eqref{eqn:cap1alt}.
On the other hand, clearly,
\begin{align}
g(x,q)=\dfrac{(-xq^2;q)_\infty}{(x^2q^3;q^3)_\infty}
= \sum_{n_1,n_2\geq 0}\dfrac{x^{n_1+2n_2}q^{n_1^2/2 + 3n_1/2 + 3 n_2}}{(q;q)_{n_1}(q^3;q^3)_{n_2}},
\label{eqn:cap1altproof}
\end{align}
and inserting a $3$-staircase in the right-hand side of \eqref{eqn:cap1altproof} and then letting $x=1$ gets us the first sum in \eqref{eqn:cap1}.

In \eqref{eqn:cap2}, the first expression was found in \cite{KR2} and the second in \cite{Kur1}.
Even though the Capparelli identities are by now quite old, such simple term-by-term positive sum-sides were found only recently.
\end{thm}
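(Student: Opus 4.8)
The plan is to separate the displayed identities into two kinds: equalities among the various $q$-series sum-sides, and equalities between a sum-side and an infinite product. The former I would attack by direct $q$-series manipulation, while the latter are the analytic incarnations of Capparelli's partition identities and will be imported from existing work.

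For the new equality---that the two-term expression in \eqref{eqn:cap1alt} coincides with the first double-sum in \eqref{eqn:cap1}---the plan is to use a single bivariate generating function together with the $3$-staircase operation $f\mapsto\widetilde{f}$ of \cite{KR2}. First I would set $g(x,q)=(1+xq^2)(-xq^3;q)_\infty/(x^2q^3;q^3)_\infty$ and use the factorization $(-xq^2;q)_\infty=(1+xq^2)(-xq^3;q)_\infty$ to observe that $g$ also equals $(-xq^2;q)_\infty/(x^2q^3;q^3)_\infty$. Expanding each of the two factorizations by Euler's identities \eqref{eqn:euler} then gives two explicit representations---one as a single double-sum, the other as a sum of two double-sums---of the \emph{same} power series $g$. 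The key point is that $\widetilde{\cdot}$ depends only on the uniquely determined coefficients $a_{m,n}$ of $g$, so $\widetilde{g}$ is independent of the chosen representation, and hence the two evaluations of $\widetilde{g}$ at $x=1$ must agree. A short computation---substituting the $x$-degrees $m=n_1+2n_2$ and $m=n_1+2n_2+1$ into the staircase shift $3m(m-1)/2$---shows that the image coming from the single-sum factorization is the first double-sum in \eqref{eqn:cap1}, while the image coming from the two-term factorization is the two-term expression in \eqref{eqn:cap1alt}, completing this equality.

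For the two-sum equality inside \eqref{eqn:cap2} I expect the same mechanism to apply: the task is to find a generating function $g_2(x,q)$ admitting two Euler-type expansions related by peeling off a finite factor, whose staircase images at $x=1$ reproduce the two sides of \eqref{eqn:cap2}; identifying $g_2$ and the correct finite factor (the analogue of the $(1+xq^2)$ above) is the only nonroutine step. Finally, the equalities of a sum-side with a product---the middle equality of \eqref{eqn:cap1} and the last equality of \eqref{eqn:cap2}---are precisely Capparelli's identities in analytic form; for these I would invoke the identification of the double-sums as the generating functions for the prescribed difference conditions established in \cite{KR2,Kur1}, combined with Capparelli's theorem, or else the direct $q$-series proofs of \cite{BJM}. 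The hard part is thus not the staircase equalities, which become elementary once the right generating function is written down, but the product identities, whose proof genuinely requires either Capparelli's combinatorial theorem or the analytic machinery of \cite{BJM}.
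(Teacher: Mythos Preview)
Your treatment of the new equality \eqref{eqn:cap1}$=$\eqref{eqn:cap1alt} is exactly the paper's argument: the same generating function $g(x,q)$, the same two Euler expansions, the same observation that the $3$-staircase operator depends only on the coefficient array, and the same evaluation at $x=1$.

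The one point where you diverge from the paper is your plan for the internal equality in \eqref{eqn:cap2}. You propose to hunt for an analogous $g_2(x,q)$ and a finite factor to peel off, so that a staircase argument would directly equate the two two-term sums. The paper does not do this. It simply records that the first two-term sum is the one produced in \cite{KR2} and the second is the one produced in \cite{Kur1}, each as the generating function for partitions satisfying Capparelli's second difference condition; since both equal that same generating function (and hence the product via Capparelli's theorem), they are equal to each other. So your fallback---invoking \cite{KR2,Kur1} plus Capparelli, or \cite{BJM}---is in fact what the paper itself relies on, and your search for $g_2$ is an optional extra rather than a needed step. There is no gap in your proposal; just be aware that the ``nonroutine step'' you flag is not actually required to complete the proof as stated.
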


Now we investigate if the exponent of $q$ in each of the summands could be perturbed by a linear form to arrive at potentially modular $q$-series companions to Capparelli's identities.

First, we look at \eqref{eqn:cap1}.
Let 
\begin{align}
A=\begin{bmatrix}
4 & 6\\ 6 &12
\end{bmatrix},
\quad J_1=1, \quad J_2=3.
\end{align}
 We investigate whether any $q$-series of the following type has a chance to be modular for some $B_1,B_2$ and $C$:
\begin{align}
F_{A,B,C,J} = \sum\limits_{n_1,n_2\geq 0} \dfrac{q^{\frac{1}{2}n^{T}An+n^TB+C }}{(q;q)_{n_1}(q^3;q^3)_{n_2}}
\label{eqn:cap1gen}
\end{align}

The required data is as follows.
Here, $Q_1,Q_2$ satisfy:
\begin{align}
1-Q_1 = Q_1^4Q_2^6,\quad 1-Q_2^3=Q_1^6Q_2^{12}.
\end{align}
There exists a unique solution with $Q_1,Q_2\in(0,1)$ given by:
\begin{align}
Q_1=\frac{3}{4},\quad Q_2=\frac{2}{3^{2/3}}.
\end{align}
Thus, we have:
\begin{align}
\xi_1=3,\quad \xi_2=24.
\end{align}
We also have:
\begin{align}
\gamma=C+\frac{29}{12}.
\end{align}
The constraint \eqref{eqn:c1gamma} becomes:
\begin{align}
C=-\frac{1}{24}+  \frac{5}{144}B_2-\frac{1}{36}B_2B_1+\frac{1}{24}B_1+\frac{1}{12}B_1^2+\frac{7}{432}B_2^2.
\end{align}
Putting this value of $C$ in constraints \eqref{eqn:c2gamma}--\eqref{eqn:c4gamma} we get the following:
\begin{align}
0&= -{\frac {B_1^{2}{ B_2}}{288}}+{\frac {5{ B_1}B_2^{2}}{576}}-{\frac {B_1^{3}}{144}}-{\frac {113B_2^{3}
	}{31104}}+{\frac {B_1}{1152}}+{\frac {149B_2}{6912}}-{
	\frac {199B_2^{2}}{20736}}-{\frac {7B_1^{2}}{576}}+{
	\frac {49B_2B_1}{1728}},\\
0&={\frac {31B_1^{2}B_2}{10368}}-{\frac {127B_1B_2^{2}}{62208}}-{\frac {7B_1^{3}}{5184}}+{\frac {79{{
				 B_2}}^{3}}{1119744}}-{\frac {173B_2^{3}B_1}{279936}}+
{\frac {31B_1}{6912}}+{\frac {335B_2}{41472}}-{\frac {
		5641B_2^{2}}{746496}}+{\frac {2105B_2^{4}}{6718464}}\nonumber\\
&-{\frac {37B_1^{2}}{20736}}+{\frac {1093B_2B_1
	}{62208}}-{\frac {31B_2^{2}B_1^{2}}{31104}}-{\frac {7
		B_1^{4}}{5184}}+{\frac {19B_2B_1^{3}}{7776}}\\
0&=-{\frac {523B_1^{3}}{663552}}-{\frac {262765B_2^{2}
	}{31850496}}-{\frac {3157B_1^{2}}{884736}}+{\frac {299B_1^{4}}{221184}}+{\frac {11393B_2}{1474560}}+{\frac {161
		B_1}{49152}}+{\frac {5329B_1^{2}B_2}{1327104}}-{
	\frac {19171B_1B_2^{2}}{7962624}}\nonumber\\
&-{\frac {32849
				 B_2^{3}B_1}{11943936}}+{\frac {12769B_2^{6}}{
		1934917632}}+{\frac {B_1^{6}}{41472}}+{\frac {B_1^{5}}{
		9216}}+{\frac {100903B_2^{5}}{1074954240}}+{\frac {71{{ 
				B_1}}^{4}B_2}{165888}}-{\frac {11B_2^{3}B_1^{3}}{
		2239488}}-{\frac {721B_1^{3}B_2^{2}}{497664}}\nonumber\\
&+{\frac {
		1373B_1^{2}B_2^{3}}{995328}}+{\frac {901B_2^{
			4}B_1^{2}}{17915904}}-{\frac {20657B_1B_2^{4}}{
		35831808}}-{\frac {565B_2^{5}B_1}{17915904}}+{\frac {{
			 B_2}B_1^{5}}{41472}}-{\frac {B_2^{2}B_1^{4}
	}{18432}}+{\frac {160699B_2^{4}}{286654464}}+{\frac {55687{
			 B_2}B_1}{2654208}}\nonumber\\
&+{\frac {2161B_2^{2}B_1^{2
	}}{442368}}-{\frac {1465B_2B_1^{3}}{331776}}+{\frac {
		793B_2^{3}}{143327232}}.
\end{align}

These constraints imply that 
\begin{align}
B_1 = B_2 =0.
\end{align}
The conclusion is that the only possible modular $q$-series of the shape \eqref{eqn:cap1gen} is the first Capparelli $q$-series found by \cite{KR2} and \cite{Kur1}.
For this, letting $B_1=B_2=0$ implies that $C=-\frac{1}{24}$.

We have:
\begin{align}
\alpha= \left(\dilog(1)-\dilog\left(\frac{3}{4}\right)  \right)+\frac{1}{3}\left(\dilog(1)-\dilog\left(\frac{8}{9}\right)  \right)
=\dilog\left(\frac{1}{4}\right)+\frac{1}{3}\dilog\left(\frac{1}{9}\right)
\end{align}

The $e^{\alpha/\varepsilon}$-type term from the asymptotic expansion of $q^{-1/24}/(q^2,q^3,q^9,q^{10};\, q^{12})_{\infty}$
is $\exp({\frac{\pi^2}{18\varepsilon}})$. This is easily obtained using asymptotic properties of theta functions and the Dedekind eta function. The general rule is that this term is equal to $\exp(\frac{P\pi^2}{3M\varepsilon})$ where the product includes $P$ pairs of (symmetric) congruences modulo $M$. Here, $M=12, P=2$.

Comparing, we get the following well-known dilogarithm identity:
\begin{align}
\dilog\left(\frac{1}{4}\right)+\frac{1}{3}\dilog\left(\frac{1}{9}\right)=\dfrac{\pi^2}{18}.
\end{align}

Now we turn to the expressions of the sort encountered in \eqref{eqn:cap1alt} and \eqref{eqn:cap2}.
Here, the sum-sides involve two summands, and thus
we perform a computer search for (positive integral) values of $B_1,B_2,B_1',B_2',C'$
to see if a sum of the form
\begin{align}
q^C\left(\sum\limits_{n_1,n_2\geq 0} \dfrac{q^{2n_1^2+6n_1n_2+6n_2^2+  B_1n_1 + B_2 n_2}}{(q;q)_{n_1}(q^3;q^3)_{n_2}}
+
\sum\limits_{n_1,n_2\geq 0} \dfrac{q^{2n_1^2+6n_1n_2+6n_2^2+  B_1'n_1+B_2'n_2+C' }}{(q;q)_{n_1}(q^3;q^3)_{n_2}}\right)
\label{eqn:cap2gen}
\end{align}
could possibly be modular.
Suppose that the first sum is asymptotically equal ($q=e^{-\varepsilon}, \varepsilon\rightarrow 0^+$) to
\begin{align}
\beta_1 \cdot e^{\alpha/\varepsilon}\cdot e^{-\gamma_1\varepsilon}\cdot \left(1 + \sum_{p=1}^{\infty}c_{p,1}\varepsilon^p \right)
\end{align}
and the second one is equal to
\begin{align}
\beta_2 \cdot e^{\alpha/\varepsilon}\cdot e^{-\gamma_2\varepsilon}\cdot \left(1 + \sum_{p=1}^{\infty}c_{p,2}\varepsilon^p \right).
\end{align}

Therefore, combined, we have the expansion:
\begin{align}
e^{\alpha/\varepsilon}\cdot \left(  \beta_1\cdot e^{-\gamma_1\varepsilon}\cdot \left(1 + \sum_{p=1}^{\infty}c_{p,1}\varepsilon^p \right)
+ \beta_2\cdot e^{-\gamma_2\varepsilon}\cdot \left(1 + \sum_{p=1}^{\infty}c_{p,2}\varepsilon^p \right)  \right).
\end{align}

Now we check if this combined asymptotic can be written in the form
$\lambda\cdot e^{\alpha/\varepsilon}$ for some constants $\lambda,\alpha$.

Within the  range $0\leq B_1,B_2,B_1',B_2',C'\leq 6$, the parameters that seem to satisfy the modular constraints are precisely the ones that correspond to the ones appearing in \eqref{eqn:cap1alt} and \eqref{eqn:cap2}.
\begin{quest}
	Extend the search to more number of sums generalizing the addition of two sums in \eqref{eqn:cap2gen}.
\end{quest}	

\begin{quest}
Prove it rigorously that indeed no other values than the ones given above
in \eqref{eqn:cap1alt} and \eqref{eqn:cap2} 
lead to possibly modular $q$-series of the shape \eqref{eqn:cap2gen}.
\end{quest}

\section{Certain Mod-9 conjectures from \cite{KR1}}

In \cite{KR1}, jointly with Russell, we conjectured the first four of the following identities. Thereafter, Russell provided the fifth companion in \cite{Rth}.
\begin{conj}[\cite{KR1,Rth}] \label{conj:KR}
We say that a partition $n=\lambda_1+\lambda_2+\cdots+\lambda_k$ (written in a weakly increasing order) 
satisfies ``Condition($i$)'' if $\lambda_{j+2}-\lambda_{j}\geq 3$ for all $1\leq j\leq k-2$ and 
$\lambda_{j+1}-\lambda_{j}\leq 1$ implies $\lambda_{j+1}+\lambda_j\equiv i\,\,(\mathrm{mod}\,\,3)$ for all $1\leq j\leq k-1$.
	
We have the following conjectures. 
For all $n\in \mathbb{N}$, the following identities hold.
\begin{enumerate}
\item Partitions of $n$ satisfying Condition($0$) are equinumerous with partitions on $n$ where each part is congruent to either
$\pm 1$, $\pm 3$ modulo $9$.
\item Partitions of $n$ satisfying Condition($0$) with smallest part at least $2$ 
are equinumerous with partitions on $n$ where each part is congruent to either
$\pm 2$, {$\pm 3$} modulo $9$.
\item Partitions of $n$ satisfying Condition($0$) with smallest part at least $3$ 
are equinumerous with partitions on $n$ where each part is congruent to either
$\pm 3$, {$\pm 4$} modulo $9$.
\item Partitions of $n$ satisfying Condition($2$) with smallest part at least $2$ 
are equinumerous with partitions on $n$ where each part is congruent to either
$2,3,5$ or $8$  modulo $9$.
\item 	Partitions of $n$ satisfying Condition($1$) without an appearance of $2+2$ as a sub-partition
are equinumerous with partitions on $n$ where each part is congruent to either
$1,4,6$ or $7$  modulo $9$.
\end{enumerate}	

\end{conj}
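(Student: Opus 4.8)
The plan is to prove each identity by establishing the corresponding sum-to-product $q$-series identity: the left-hand (difference-condition) side of item $(i)$ has a known analytic generating function, namely an explicit Nahm-type multisum $F_i(q)$ produced in \cite{Kur2}, while the right-hand (congruence) side is the infinite product $P_i(q)$ obtained by inverting the relevant Pochhammer symbols, for instance $P_1(q)=1/(q,q^3,q^6,q^8;\,q^9)_\infty$ for item $(1)$ and $P_4(q)=1/(q^2,q^3,q^5,q^8;\,q^9)_\infty$ for item $(4)$. Thus the combinatorial statement reduces to proving $F_i(q)=P_i(q)$ for each $i$. I would not attempt a direct bijection between the two families of partitions; instead I would work entirely with generating functions.

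First I would refine each side into a two-variable function. Introduce $F_i(x;q)=\sum_\lambda x^{\ell(\lambda)}q^{|\lambda|}$, the sum over partitions $\lambda$ satisfying Condition($i$) together with the stated restriction on the smallest part, where $x$ marks the number of parts $\ell(\lambda)$. Peeling off the largest part and its immediate neighbour, and tracking the residue modulo $3$ of the top part (which is exactly the data governing whether the gap $\lambda_{j+1}-\lambda_{j}\le 1$ is permitted), yields a finite system of coupled $q$-difference equations relating $F_i(x;q)$ to $F_i(xq^a;q)$ for small shifts $a$, with auxiliary functions indexed by the residue and gap type of the top two parts; the linked-partition-ideal formalism of Andrews gives a systematic way to read these recursions off the difference conditions. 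The specialisation $x\to 1$ recovers $F_i(q)$, and $F_i(0;q)=1$ supplies the initial condition.

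Next I would solve the system and identify the limit. After eliminating the auxiliary components, the system should collapse to a single higher-order $q$-difference equation in $x$, and one shows it has a unique formal power-series solution normalised by $F_i(0;q)=1$. The product side is then handled by exhibiting a theta-type refinement $\widehat{P}_i(x;q)$ with $\widehat{P}_i(1;q)=P_i(q)$ and $\widehat{P}_i(0;q)=1$, and checking that $\widehat{P}_i$ satisfies the same $q$-difference equation; uniqueness forces $F_i=\widehat{P}_i$ and hence $F_i(q)=P_i(q)$. The verification that $\widehat{P}_i$ solves the equation is where a nontrivial classical input enters, most likely the machinery of Bailey pairs and Bailey's lemma, or the quintuple product identity, used to expand the mod-$9$ products as theta quotients. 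It may also be economical to prove the five identities jointly: items $(2)$ and $(3)$ merely restrict the smallest part of the Condition($0$) partitions of item $(1)$, so their products ought to be linked to $P_1(q)$ by elementary product manipulations, cutting down the number of independent theta evaluations required, with the Condition($1$) and Condition($2$) cases $(4)$ and $(5)$ treated in parallel.

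The hard part will be twofold. First, the underlying multisum has rank two (a genuine double sum, reflecting the mod-$3$ structure of the congruence condition), so the $q$-difference system is genuinely coupled, and decoupling it into a tractable scalar recursion is delicate. Second, and more seriously, certifying that the mod-$9$ product solves that recursion requires a theta-function evaluation that is not one of the textbook Rogers--Ramanujan--Slater cases; producing, or recognising, the correct Macdonald-type or quintuple-product identity for these specific residue classes is the genuine obstacle. An attractive alternative that sidesteps the combinatorial recursion is the representation-theoretic route: realise each $P_i(q)$ as a principally specialised character of a standard module (the mod-$9$ pattern strongly suggests a level-tied character of an affine Lie algebra) and realise $F_i(q)$ as the corresponding fermionic or PBW-type character via a crystal or difference-condition basis, so that the identity becomes an instance of a character equality; but establishing such a combinatorial basis rigorously is itself substantial, and I would pursue the $q$-difference approach as the primary line of attack.
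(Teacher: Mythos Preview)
The paper does not prove this statement: it is explicitly labeled a \emph{Conjecture}, and the surrounding text states that ``as yet, these conjectures remain unproved.'' There is therefore no proof in the paper to compare your proposal against.

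Your submission is not a proof either; it is a research plan. You correctly identify the reduction to the $q$-series identities $F_i(q)=P_i(q)$ (which the paper also records, citing \cite{Kur2}), and you outline a plausible $q$-difference/uniqueness strategy. But the essential steps are left as intentions rather than arguments: you do not actually derive the coupled $q$-difference system, you do not exhibit the two-variable product refinement $\widehat{P}_i(x;q)$, and you do not produce the theta or Bailey-pair identity needed to verify that the product side satisfies the recursion. You yourself flag these as ``the hard part,'' and they are precisely the content that would constitute a proof. Until those steps are carried out, what you have is a strategy memo for an open problem, not a proof of the conjecture.
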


As is clear, the products in the first three conjectures are symmetric modulo 9, and can be expressed in terms of  Dedekind eta and theta functions. They are therefore modular up to an appropriate factor of $q^C$.
These products are related to the principally specialized characters of level $3$ standard modules for the affine Lie algebra $D_4^{(3)}$.

In a fascinating development, Kur\c{s}ung\"oz \cite{Kur2} found analytic sum-sides as generating functions of the partitions satisfying the difference conditions.
However, as yet, these conjectures remain unproved.	
Thus, equivalently, we have the conjectures in the following purely $q$-series form.
\begin{conj}[\cite{Kur2}] We have the following equalities of $q$-series:
	\begin{align}
	\sum\limits_{n_1,n_2\geq 0} \dfrac{q^{n_1^2+3n_1n_2+3n_2^2}}{(q;q)_{n_1}(q^3;q^3)_{n_2}}&=\dfrac{1}{(q,q^3,q^6,q^9;q^9)_\infty},\\
	\sum\limits_{n_1,n_2\geq 0} \dfrac{q^{n_1^2+3n_1n_2+3n_2^2+n_1+3n_2}}{(q;q)_{n_1}(q^3;q^3)_{n_2}}&=\dfrac{1}{(q^2,q^3,q^6,q^7;q^9)_\infty},\\
	\sum\limits_{n_1,n_2\geq 0} \dfrac{q^{n_1^2+3n_1n_2+3n_2^2+2n_1+3n_2}}{(q;q)_{n_1}(q^3;q^3)_{n_2}}&=\dfrac{1}{(q^3,q^4,q^5,q^6;q^9)_\infty},\\
	\sum\limits_{n_1,n_2\geq 0} \dfrac{q^{n_1^2+3n_1n_2+3n_2^2+n_1+2n_2}}{(q;q)_{n_1}(q^3;q^3)_{n_2}}&=\dfrac{1}{(q^2,q^3,q^5,q^8;q^9)_\infty},\\
	\sum\limits_{\substack{n_1\geq 0\\n_2\geq 1}} 
	\dfrac{q^{n_1^2+3n_1n_2+3n_2^2 + n_1 + 4n_2}}{(q;q)_{n_1}(q^3;q^3)_{n_2}}
	+\sum\limits_{\substack{n_1\geq 0\\n_2\geq 1}} \dfrac{q^{n_1^2+3n_1n_2+3n_2^2 + 2n_1+4n_2+1}}{(q;q)_{n_1}(q^3;q^3)_{n_2}}
	&+\sum\limits_{\substack{n_1\geq 0}} \dfrac{q^{n_1^2}}{(q;q)_{n_1}}
	=\dfrac{1}{(q^1,q^4,q^6,q^7;q^9)_\infty}.
	\end{align}

\end{conj}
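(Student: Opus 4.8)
The plan is to prove the five $q$-series identities of Kur\c{s}ung\"oz by establishing that each analytic sum-side has the same generating function as the partitions satisfying the stated difference conditions, and then separately that these difference-condition generating functions equal the claimed infinite products. Since the underlying partition identities in Conjecture \ref{conj:KR} are themselves open, the honest route is not to assume them; rather, I would try to prove the $q$-series identities directly, which would simultaneously settle the partition conjectures. The most promising strategy is the standard functional-equation (Andrews--Gordon) approach: introduce a parameter $x$ tracking the number of parts and set up $q$-difference equations for the two-variable refinements of both sides.

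First I would write $G_i(x,q)=\sum_{n_1,n_2\geq 0} x^{?}\, q^{n_1^2+3n_1n_2+3n_2^2+\cdots}/\big((q;q)_{n_1}(q^3;q^3)_{n_2}\big)$ for each sum-side, choosing the $x$-weighting so that $x$ marks the largest part (or the number of parts), and derive the recurrence each $G_i$ satisfies under $x\mapsto xq$. The key structural observation is that the quadratic form $n_1^2+3n_1n_2+3n_2^2$ with denominators $(q;q)_{n_1}(q^3;q^3)_{n_2}$ is exactly the Nahm-type data associated to a $3$-staircase construction, so I expect the functional equations to close up using the two Euler identities \eqref{eqn:euler} together with the $3$-staircase insertion operation $f\mapsto\widetilde{f}$ recalled in the Capparelli discussion. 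On the product side, the symmetric products $1/(q,q^3,q^6,q^9;q^9)_\infty$ etc.\ (conjectures (1)--(3)) satisfy their own easily-derived recurrences, so matching initial conditions and uniqueness of the bounded analytic solution to the $q$-difference equation would finish those three cases. The fifth identity is structurally different: it is a sum of three pieces (two restricted double sums with $n_2\geq 1$ plus a single Rogers--Ramanujan-type sum), so here I would first combine the two double sums via a $3$-staircase-type manipulation and then identify the residual single sum $\sum_{n_1\geq 0} q^{n_1^2}/(q;q)_{n_1}$ using the first Rogers--Ramanujan identity, reconciling the result with the product $1/(q,q^4,q^6,q^7;q^9)_\infty$.

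The hard part will be establishing the combinatorial equivalence between the analytic sum-sides and the difference conditions defining ``Condition($i$)'', i.e.\ proving that each double sum is the genuine generating function for the partitions Kur\c{s}ung\"oz claims it enumerates. This is precisely where the conjectures have resisted proof: the difference conditions mix a ``gap $\geq 3$ at distance two'' constraint with a congruence condition (mod $3$) that is only triggered when consecutive parts are close, and encoding this congruence-dependent gap into a clean two-variable $q$-difference equation is delicate. I expect that matching the product sides to the eta/theta expressions will be routine (for (1)--(3) via symmetry, using standard Jacobi-triple-product evaluations), and that the Euler/staircase manipulations on the sum-sides will be mechanical; the genuine obstacle is the bijective or recursive bridge from the sum-side to the partition statistic, together with proving uniqueness so that agreement of finitely many initial coefficients upgrades to an identity of formal power series. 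A fallback, if the direct functional-equation approach stalls on the congruence-triggered gaps, is to attack the three symmetric cases analytically via the Vlasenko--Zwegers asymptotic machinery and Bailey-pair techniques, while treating the asymmetric cases (4) and (5) separately, since their products are not symmetric modulo $9$ and therefore require a different modular input.
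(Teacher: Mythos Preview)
The statement you are attempting to prove is labeled \emph{Conjecture} in the paper, and the paper explicitly says ``as yet, these conjectures remain unproved.'' There is no proof in the paper to compare against; the paper only records the analytic sum-sides found by Kur\c{s}ung\"oz and then uses the asymptotic machinery to search for further modular companions, not to establish the identities themselves.

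Your proposal is not a proof but a plan, and you yourself identify the gap: the step linking the difference conditions (Condition($i$)) to the infinite products is exactly the open problem of Conjecture~\ref{conj:KR}, and your functional-equation scheme does not circumvent it. Deriving $q$-difference equations for the $G_i(x,q)$ is feasible, and the $3$-staircase/Euler manipulations will indeed show that the double sums are the generating functions for the partitions satisfying Condition($i$) with the stated initial constraints (that is precisely what Kur\c{s}ung\"oz did in \cite{Kur2}); but this only reduces the $q$-series conjectures back to the partition conjectures of \cite{KR1,Rth}, which is where things already stood. Your suggestion that ``matching initial conditions and uniqueness'' on the product side would finish cases (1)--(3) presupposes that the symmetric mod-$9$ products satisfy the \emph{same} $q$-difference system as the sum-sides, and no argument for that is given; this is the entire content of the conjecture. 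Likewise, your fallback of using Vlasenko--Zwegers asymptotics cannot prove an identity: it only supplies necessary conditions for modularity and at best a dilogarithm identity such as \eqref{eq:NewDilog}, which the paper records as a consequence \emph{if} the conjectures hold, not as a route to proving them.

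In short: there is nothing wrong with your outline as a research strategy, but it is not a proof, and the paper makes no claim to have one.
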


We now vary the term in the exponent to see if there are further $q$-series of a similar shape that may be modular.
Here we have
\begin{align}
A=\begin{bmatrix}
2 & 3\\ 3 & 6
\end{bmatrix},
\quad J_1=1, \quad J_2=3.
\end{align}
For this matrix $A$ the constants $Q_1,Q_2$ satisfy:
\begin{align}
1-Q_1 = Q_1^2Q_2^3,\quad 1-Q_2^3=Q_1^3Q_2^{6}.
\end{align}
It can be checked that the following values satisfy these equations:
\begin{align}
Q_1=1-2\sin\left(\pi/18\right),\quad Q_2^3=4\sin^2\left(\pi/18\right) + 4\sin\left(\pi/18\right),
\end{align}
and it can be checked numerically that indeed there is a unique solution with both $Q_1,Q_2\in (0,1)$.
We have the following minimal polynomials:
\begin{align}
(Q_1)^3 - 3 (Q_1)^2 + 1 = 0, \label{eqn:Q1}\\
(Q_2)^9 - 6(Q_2)^6 + 3(Q_2)^3+1=0.
\end{align}
It can be checked that the $\xi_i$ satisfy the following polynomials:
\begin{align}
(\xi_1)^3 - 3 (\xi_1)^2 - 1 = 0, \\
(\xi_2)^3 - 9(\xi_2)^2 - 54(\xi_2)-27=0.
\end{align}

In this case, the polynomials satisfied by $B_1$ and $B_2$ in order to fulfill the necessary conditions for modularity are quite nasty
and thus we do not reproduce them here.
Numerically, we have found that between the range $-40\leq B_1,B_2\leq 40$, the only values that satisfy these polynomials are
the ones corresponding to the first three conjectures.
\begin{quest}
	It will be interesting to prove that indeed no other values of $B_1$ and $B_2$ possibly lead to modular $q$-series. In particular, it will be interesting to first extend the search to include fractional values of $B_1$ and $B_2$.
\end{quest}
For these three cases, comparing the $e^{\alpha/\varepsilon}$-type term in the asymptotic expansion of the product (here, we have $2$ symmetric pairs of congruences modulo $9$ included, giving us $\exp(\frac{2\pi^2}{27\varepsilon})$), we arrive at the following dilogarithm identity which must be true if the conjectures are to hold:
\begin{align}
\left(\dilog(1)-\dilog\left(Q_1\right)  \right)+\frac{1}{3}\left(\dilog(1)-\dilog\left({Q_2}^3\right)  \right) = \frac{2\pi^2}{27},
\end{align}
or equivalently,
\begin{align}
\dilog\left(Q_1\right)+\frac{1}{3}\dilog\left({Q_2}^3\right) = \frac{4\pi^2}{27}\label{eq:NewDilog}.
\end{align}
This identity can be verified numerically with a high precision using a computer.

\begin{quest}
	In \cite{Lox}, Loxton conjectured several dilogarithm identities involving the roots of the polynomial \eqref{eqn:Q1},
	and also proved two of these identities by first proving new $q$-series identities and then performing an asymptotic analysis using saddle point method.
	It will be interesting to prove and connect \eqref{eq:NewDilog} to the dilogarithm identities in \cite{Lox}.
\end{quest}

\end{document}